\newcommand{\Zz}{\mathbb{Z}}
\newcommand{\Qq}{\mathbb{Q}}
\theoremstyle{plain}
\newtheorem{theorem}{Theorem}[section]    
\newtheorem{twisting lemma}[theorem]{Twisting lemma}
\newtheorem{lemma}[theorem]{Lemma}       
\newtheorem{proposition}[theorem]{Proposition}  
\theoremstyle{remark}
\newtheorem{remark}[theorem]{Remark}   
\def\Gal{\hbox{\rm Gal}}
\def\cm{\hbox{\hbox{\rm C}\kern-5pt{\raise 1pt\hbox{$|$}}}}
\def\lhfl#1#2{\smash{\mathop{\hbox to 12mm{\leftarrowfill}}
\limits^{#1}_{#2}}}
\def\rhfl#1#2{\smash{\mathop{\hbox to 12mm{\rightarrowfill}}
\limits^{#1}_{#2}}}
\def\build#1_#2^#3{\mathrel{
\mathop{\kern 0pt#1}\limits_{#2}^{#3}}}
\def\htrait#1#2{\smash{\mathop{\hbox to 12mm{\hrulefill}}
\limits^{#1}_{#2}}}
\def\sxbullet{{\raise 2pt\hbox{\bf .}}}
\title{A note on prime divisors of polynomials $P(T^k), k \geq 1$}
\author{Fran\c cois Legrand}
\email{legrandfranc@tx.technion.ac.il}
\address{Department of Mathematics, Technion - Israel Institute of Technology, Haifa 32000, Israel}
\date{\today}
\begin{document}

\maketitle

\begin{abstract}
Let $F$ be a number field, $O_F$ the integral closure of $\Zz$ in $F$ and $P(T) \in O_F[T]$ a monic separable polynomial such that $P(0) \not=0$ and $P(1) \not=0$. We give precise sufficient conditions on a given positive integer $k$ for the following condition to hold: there exist infinitely many non-zero prime ideals $\mathcal{P}$ of $O_F$ such that the reduction modulo $\mathcal{P}$ of $P(T)$ has a root in the residue field $O_F/\mathcal{P}$, but the reduction modulo $\mathcal{P}$ of $P(T^k)$ has no root in $O_F/\mathcal{P}$. This makes a result from a previous paper (motivated by a problem in field arithmetic) asserting that there exist (infinitely many) such integers $k$ more precise.
\end{abstract}

\section{Introduction}

Given a number field $F$, recall that a (non-zero) prime ideal $\mathcal{P}$ of the integral closure $O_F$ of $\Zz$ in $F$ is a {\it{prime divisor}} of a monic separable polynomial $P(T) \in O_F[T]$ if the reduction of $P(T)$ modulo $\mathcal{P}$ has a root in the residue field $O_F/\mathcal{P}$. Examples of standard results on this classical number theoretic notion, which may be obtained as applications of the Chebotarev density theorem, are (1) any monic separable polynomial $P(T) \in O_F[T]$ has infinitely many prime divisors and (2) if $P(T)$ is irreducible over $F$, then all but finitely many prime ideals of $O_F$ are prime divisors of $P(T)$ if and only if $P(T)$ has degree 1. See {\it{e.g.}} \cite{Nag69}, \cite{Sch69} and \cite{GB71} for more classical results.

Prime divisors of polynomials are known to appear in several areas of interest. For instance, in field arithmetic, the set of prime divisors of the product of the irreducible polynomials over $F$ of the branch points of a finite Galois extension $E/F(T)$ with $F$ algebraically closed in $E$ corresponds to the set of all prime ideals of $F$ that ramify in at least one specialization of $E/F(T)$; see \cite{Bec91} and \cite{Leg16c}. Another application deals with the following classical question in diophantine geometry: does a given superelliptic curve $C:y^n=P(t)$ over $F$ has twists without non-trivial rational points? Under the {\it{abc}}-conjecture, Granville \cite{Gra07} proved that, in the case of hyperelliptic curves over $\Qq$, only a few (quadratic) twists of $C$ have non-trivial rational points, provided that $C$ has genus at least 3. Partial unconditional results were then given by Sadek \cite{Sad14} (in the case of hyperelliptic curves over $\Qq$) and the author \cite{Leg16b}, asserting that, to guarantee the existence of twists of $C$ without non-trivial rational points, it suffices that $n$ divides the degree of $P(T)$ and there exist infinitely many prime ideals of $O_F$ which are not prime divisors of $P(T)$.

A more recent application in field arithmetic consists in producing examples of ``non-parametric extensions" over number fields \cite{Leg16c}, {\it{i.e.}}, finite Galois extensions $E/F(T)$ with Galois group $G$ and $F$ algebraically closed in $E$ which do not provide all the Galois extensions of $F$ with Galois group $G$ by specializing the indeterminate $T$. For example, the main result of \cite{Leg16a} asserts that such an extension of $F(T)$ with given non-trivial Galois group $G$ exists, unless $G$ is the Galois group of no Galois extension $E/F(T)$ with $F$ algebraically closed in $E$. See also \cite{Deb16} for other results about non-parametric extensions.

The proof of the main result of \cite{Leg16a} requires the following statement on prime divisors of polynomials, whose an ineffective proof may be found in \cite[\S4]{Leg16a}.

\vspace{2.4mm}

\noindent
{\bf{Proposition 1.}} \cite[Proposition 3.5]{Leg16a}
{\it{Assume $P(0) \not=0$ and $P(1) \not=0$. Then there exist infinitely many positive integers $k$ such that the following condition holds:

\vspace{0.5mm}

\noindent
{\rm{($*$/$k$)}} there exist infinitely many prime ideals of $O_F$ each of which is a prime divisor of $P(T)$ but not of $P(T^{k})$.}}

\vspace{2.4mm}

\noindent
Note that the result fails trivially if either $P(0)=0$ or $P(1)=0$. 

The aim of the present note is to make Proposition 1 effective by elementary techniques, thus allowing to make previous applications partially effective as well (in particular, that to superelliptic curves). We give precise sufficient conditions on a given positive integer $k$, which depend on the nature of the roots of the polynomial $P(T)$, for condition {\rm{($*$/$k$)}} to hold. In particular, we provide upper bounds on the number of distinct prime factors of such an integer $k$ and the multiplicity of each of them. This is motivated by the trivial fact that condition {\rm{($*$/$k$)}} holds for every multiple $k \geq 1$ of any given integer $k_0 \geq 1$, as soon as condition {\rm{($*$/$k_0$)}} holds. Proposition 2 below, which is Theorem \ref{thm 2} in the sequel, gives an idea of the effectiveness of our method. 

\vspace{2.4mm}

\noindent
{\bf{Proposition 2.}} {\it{Assume $P(0) \not=0$ and no root of unity is a root of $P(T)$. Then there exists an effective positive constant $c$ such that condition {\rm{($*$/$k$)}} of Proposition 1 holds for any positive integer $k$ which has at least one prime factor $p \geq c$.}}

\vspace{2.4mm}

\noindent
See \S2.4 for the definition of $c$, as well as Theorems \ref{thm 1} and \ref{thm 3} for the case where each root of $P(T)$ is a root of unity and the ``mixed" case. See also \S4 where we show that the conclusion of Proposition 2 fails in general for polynomials with roots of unity among their roots.

\vspace{2.4mm}

{\bf{Acknowledgments.}} This work is partially supported by the Israel Science Foundation (grants No. 696/13, No. 40/14 and No. 577/15).

\section{Statements of Theorems \ref{thm 1}, \ref{thm 2} and \ref{thm 3}}

This section is organized as follows. In \S2.1-4, we state the needed notation for Theorems \ref{thm 1}, \ref{thm 2} and \ref{thm 3}, which are then given in \S2.5.

\subsection{General notation}

For any number field $L$, $O_L$ is the integral closure of $\Zz$ in $L$ and, given a non-zero prime ideal $\mathcal{P}$ of $O_L$, $v_\mathcal{P}$ is the associated valuation over $L$. 

Given a number field $F$, let $P(T) \in O_F[T]$ be a monic separable polynomial such that $P(0) \not=0$ and $P(1) \not=0$. Let

\noindent
- $L_1$ be the splitting field of $P(T)$ over $F$, 

\noindent
- $r_1$ the number (possibly zero) of roots of $P(T)$ that are roots of unity,  

\noindent
- $r_2$ the number (possibly zero) of roots of $P(T)$ that are units of $O_{L_1}$, but not roots of unity, 

\noindent
-  {\hbox{$r_3$ the number (possibly zero) of roots of $P(T)$ that are not units of $O_{L_1}$.}}

\noindent
Set $r=r_1+r_2+r_3>0.$ We denote the (distinct) roots of $P(T)$ by 
$$t_1,\dots,t_{r_1}, t_{r_1+1}, \dots, t_{r_1 + r_2}, t_{r_ 1 + r_2+1}, \dots, t_{r_ 1 + r_2 + r_3 }=t_r$$ and assume that 

\noindent
- $t_1,\dots,t_{r_1}$ are roots of unity (if $r_1 >0$),

\noindent
- $t_{r_1+1}, \dots,t_{r_1 + r_2}$ are units of $O_{L_1}$, but not roots of unity (if $r_2 >0$),

\noindent
- $t_{r_1 + r_2+1}, \dots,t_r$ are not units of $O_{L_1}$ (if $r_3 >0$).

Pick a positive integer $k_0$ such that, for each prime number $p \geq k_0$, the fields $\mathbb{Q}(e^{2i \pi /p})$ and $L_1$ are linearly disjoint over $\Qq$ \footnote{Since the extension $\mathbb{Q}(e^{2i \pi /p})/\Qq$ totally ramifies at $p$, the fields $\mathbb{Q}(e^{2i \pi /p})$ and $L_1$ are linearly disjoint over $\Qq$ as soon as $p$ does not ramify in the extension $L_1/\Qq$.}. Finally, set
$$ d_1= \frac{|\bigcup_{j=1}^{r} \Gal(L_1/F(t_j))|}{|\Gal(L_1/F)|}>0.$$

\subsection{Data associated with $t_1, \dots, t_{r_1}$}
Assume $r_1 > 0$. For each $j \in \{1,\dots,r_1\}$, $t_j$ is a root of unity and $t_j \not= 1$ (as $P(1) \not=0$). Then there exist two coprime integers $m_j$ and $n_j$ such that $1 \leq m_j < n_j$ and $t_j=e^{2i \pi m_j/n_j}.$
Denote the set of all prime factors of $n_j$ by $\mathcal{S}_j$,
the smallest element of $\mathcal{S}_1 \cup \cdots \cup \mathcal{S}_{r_1}$ by 
$p_{\rm{min}}$ and, by $A_0,$ we mean
the smallest positive integer $A$ that satisfies

$$A > \frac{\log ([F:\Qq]) + \log(r_1) - \log(d_1)}{\log(p_{\rm{min}})}.$$

\subsection{Data associated with $t_{r_1+1}, \dots, t_{r_1 + r_2}$}
Assume $r_2 > 0$. Let $\{u_1, \dots, u_{v}\}$ be a system of fundamental units of $O_{L_1}$, {\it{i.e.}}, $u_1, \dots, u_{v}$ are units of $O_{L_1}$ such that each unit $u$ of $O_{L_1}$ can be uniquely written as
$u=\zeta \cdot u_1^{a_1} \cdots u_v^{a_{v}},$
where $\zeta$ is a root of unity and $a_1, \dots, a_{v}$ are integers.

For each $j \in \{r_1+1, \dots,r_1 + r_2\}$, set $t_j=\zeta_j \cdot u_1^{a_{j,1}} \cdots u_v^{a_{j,v}},$
where $\zeta_j$ is a root of unity and ${a_{j,1}}, \dots, {a_{j,v}}$ are integers. As $t_j$ is not a root of unity, one has $|{a_{j,l_j}}| \geq 1$ for some $l_j \in \{1,\dots,v\}$. Finally, set 
$$a_j = {\rm{gcd}}({|a_{j,1}}|, \dots, |{a_{j,v}}|) \in \mathbb{N} \setminus \{0\}$$ and
$a_0={\rm{lcm}}  (a_{r_1+1}, \dots,a_{r_1 + r_2}).$

\subsection{Data associated with $t_{r_1 + r_2+1}, \dots, t_{r}$} Assume $r_3 >0$. Given $j \in \{r_1 + r_2+1,\dots,r\}$, one has $t_j \not=0$ (as 0 is not a root of $P(T)$) and $t_j$ is an element of $O_{L_1}$ which is not a unit. Pick a non-zero prime ideal $\mathcal{P}_j$ of $O_{L_1}$ such that $v_{\mathcal{P}_j}(t_j)$
is a positive integer. Finally, set 
$v_0 = {\rm{lcm}} (v_{\mathcal{P}_{r_1 + r_2+1}}(t_{r_1 + r_2+1}), \dots, v_{\mathcal{P}_{r}}(t_{r}))$
and, in the case where $r_2 + r_3 >0$, we define the integer $c$ as follows:

\noindent
- $c= {\rm{max}} \, (r_2+r_3+1, k_0, {\rm{lcm}} (a_0, v_0) + 1)$ if $r_2 >0$ and $r_3 >0$,

\noindent
- $c= {\rm{max}} \, (r_3+1,  v_0 + 1)$ if $r_2 =0$,

\noindent
- $c={\rm{max}} \, (r_2+1, k_0, a_0 + 1)$ if $r_3=0$.

\subsection{Statements of Theorems \ref{thm 1}, \ref{thm 2} and \ref{thm 3}}

First, we consider the case where each root of $P(T)$ is a root of unity.

\begin{theorem} \label{thm 1}
Assume $P(1) \not=0$ and each root of $P(T)$ is a root of unity. Then condition {\rm{($*/{\rm{lcm}} (p_1^{A_0}, \dots, p_{r_1}^{A_0})$)}} of Proposition 1 holds for every $r_1$-tuple $(p_1,\dots,p_{r_1})$ of prime numbers in $\mathcal S_1 \times \cdots \times \mathcal S_{r_1}$.
\end{theorem}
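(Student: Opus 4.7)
The plan is to apply Chebotarev's density theorem to the cyclotomic extension $E = F(\zeta_{N'})$ of $F$, where $k = \mathrm{lcm}(p_1^{A_0},\dots,p_{r_1}^{A_0})$ and $N' = k \cdot \mathrm{lcm}(n_1,\dots,n_{r_1})$. The field $E$ contains both $L_1$ and the splitting field of $P(T^k)$ over $F$, and $\Gal(E/F)$ is abelian, embedding into $(\Zz/N'\Zz)^*$ via $\sigma \mapsto \alpha$ with $\sigma(\zeta_{N'}) = \zeta_{N'}^{\alpha}$. For each unramified prime $\mathcal{P}$ of $O_F$ with residue field of size $q$, the Frobenius at $\mathcal{P}$ has parameter $\alpha \equiv q \pmod{N'}$.

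First I would reformulate both conditions on $\mathcal{P}$ as divisibility conditions on $q-1$. Writing $t_j = \zeta_{n_j}^{m_j}$ with $\gcd(m_j,n_j) = 1$, one checks that $\mathcal{P}$ is a prime divisor of $P(T)$ if and only if $n_j \mid q-1$ for some $j$, so by Chebotarev the density equals $d_1$. For $P(T^k)$, I would parametrize the $k$-th roots of $t_j$ as $\zeta_{kn_j}^c$ with $c \equiv m_j \pmod{n_j}$; since $\gcd(c,n_j) = 1$, a prime-by-prime analysis of $\gcd(c,kn_j)$ shows that the minimum order of such a root is $k_j' n_j$, where $k_j' = \prod_{p \mid \gcd(k,n_j)} p^{v_p(k)}$. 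Consequently, $\mathcal{P}$ is a prime divisor of $P(T^k)$ if and only if $k_j' n_j \mid q-1$ for some $j$. Because $p_j \in \mathcal{S}_j$ divides both $k$ (at exact exponent $A_0$) and $n_j$, one has $p_j^{A_0} n_j \mid k_j' n_j$; hence the condition ``$p_j^{A_0} n_j \nmid q-1$ for every $j$'' is a sufficient condition for $\mathcal{P}$ not to be a prime divisor of $P(T^k)$.

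To conclude, I would combine Chebotarev with a union bound. For each $j$, the density of primes with $p_j^{A_0} n_j \mid q-1$ equals $1/[F(\zeta_{p_j^{A_0} n_j}):F]$, and the standard estimate $[F(\zeta_M):F] \geq \phi(M)/[F:\Qq]$ together with $\phi(p_j^{A_0} n_j) = p_j^{A_0}\phi(n_j) \geq p_{\min}^{A_0}$ bounds each such density by $[F:\Qq]/p_{\min}^{A_0}$. Summing the $r_1$ bounds and subtracting from $d_1$, the density of primes that are prime divisors of $P(T)$ but not of $P(T^k)$ is at least
\[
d_1 - \frac{r_1\,[F:\Qq]}{p_{\min}^{A_0}},
\]
which the defining inequality of $A_0$ makes strictly positive. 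Infinitely many such primes therefore exist.

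The main technical obstacle is the minimum-order computation in the second step; it rests on $\gcd(m_j,n_j) = 1$ together with the Chinese Remainder Theorem (to select $c$ maximizing $\gcd(c,kn_j)$), and the choice $p_j \in \mathcal{S}_j$ is essential precisely to ensure $p_j^{A_0} \mid k_j'$, thereby converting the exact condition ``$k_j' n_j \mid q-1$'' into the clean sufficient condition ``$p_j^{A_0} n_j \mid q-1$'' used in the density bound.
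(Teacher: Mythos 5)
Your proof is correct and is essentially the paper's own argument in different notation: your minimum-order computation (with $k_j'$ equal to the paper's $h_k(n_j)$) is exactly Lemma \ref{prime 2}, your Euler-function/union bound reproduces Lemmas \ref{prime 1} and \ref{prime 3}, and the defining inequality for $A_0$ is used in the same way. The only difference is presentational: you unfold the Chebotarev step explicitly inside the abelian extension $F(e^{2i\pi/N'})/F$, whereas the paper packages it as the criterion that ($*$/$k$) holds if and only if $f(k)<1$, quoted from \cite[Lemma 2.2]{Leg16a} as part (1) of Theorem \ref{thm 4}.
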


Next, we handle the case where no root of $P(T)$ is a root of unity.

\begin{theorem} \label{thm 2}
Assume $P(0) \not=0$ and no root of unity is a root of $P(T)$. Then condition {\rm{($*/p$)}} of Proposition 1 holds for every prime number $p \geq c$.
\end{theorem}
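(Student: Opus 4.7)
The plan is to translate condition $(*/p)$ into a question about Frobenius conjugacy classes in $\Gal(L_p/F)$, where $L_p := L_1(\zeta_p, t_1^{1/p},\dots,t_r^{1/p})$ and $\zeta_p := e^{2i\pi/p}$. Setting aside the finitely many exceptional primes (those ramified in $L_p/F$, those dividing $P(0)$ or the discriminant of $P$, etc.), a prime $\mathcal{P}$ of $O_F$ is a prime divisor of $P(T)$ (resp.\ of $P(T^p)$) iff a Frobenius $\sigma$ at $\mathcal{P}$ fixes some $t_j$ (resp.\ some root $\zeta_p^b t_j^{1/p}$ of $P(T^p)$). A direct calculation shows that, under the assumption $\sigma|_{\Qq(\zeta_p)} = \mathrm{id}$, the second property is equivalent to $\sigma(t_j^{1/p}) = t_j^{1/p}$ for some $j$ with $\sigma(t_j) = t_j$; whereas if $\sigma|_{\Qq(\zeta_p)} \neq \mathrm{id}$, then any fixed $t_j$ automatically yields a fixed root of $P(T^p)$. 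I therefore look for $\sigma$ inside the subgroup $H := \Gal(L_p/L_1(\zeta_p))$ of $\Gal(L_p/F)$ such that $\sigma(t_j^{1/p}) \neq t_j^{1/p}$ for every $j \in \{1,\dots,r\}$. This choice automatically forces $\sigma|_{L_1} = \mathrm{id}$ (so $\mathcal{P}$ splits completely in $L_1/F$ and is a prime divisor of $P$) as well as $\sigma|_{\Qq(\zeta_p)} = \mathrm{id}$; moreover a direct verification (using that $H$ is normal in $\Gal(L_p/F)$ and that the Kummer coordinates of a conjugate of $\sigma$ are a permutation of $\sigma$'s rescaled by the cyclotomic character of the conjugating element) shows that the ``no fixed $t_j^{1/p}$'' requirement is $\Gal(L_p/F)$-conjugation-invariant, so Chebotarev applied to $L_p/F$ at the conjugacy class of such a $\sigma$ will produce the desired infinite family of primes.

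Writing $\sigma(t_j^{1/p}) = \zeta_p^{a_j(\sigma)} t_j^{1/p}$ embeds $H$ into $(\Zz/p\Zz)^r$ via Kummer theory, so the task reduces to finding $\sigma \in H$ with all coordinates $a_j(\sigma)$ non-zero. The key technical step is to verify that each projection $a_j \colon H \to \Zz/p\Zz$ is surjective, which amounts to showing that no $t_j$ is a $p$-th power in $L_1(\zeta_p)^*$. For a non-unit root $t_j$ ($j > r_2$), the chosen prime $\mathcal{P}_j$ of $O_{L_1}$ satisfies $1 \leq v_{\mathcal{P}_j}(t_j) \leq v_0 < p$, and any extension of $\mathcal{P}_j$ to $L_1(\zeta_p)$ has ramification index at most $[L_1(\zeta_p):L_1] \leq p-1 < p$, so the valuation of $t_j$ at such an extension is coprime to $p$, ruling out that $t_j$ is a $p$-th power. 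For a unit root $t_j$ ($r_1 < j \leq r_1 + r_2$; so $r_2 > 0$, hence $p \geq k_0$ by the definition of $c$, hence $[L_1(\zeta_p):L_1] = p-1$ by the defining property of $k_0$), if $t_j = \alpha^p$ in $L_1(\zeta_p)^*$ then taking the norm from $L_1(\zeta_p)$ down to $L_1$ gives $t_j^{p-1} = N(\alpha)^p$ in $O_{L_1}^*$; expanding both sides in the chosen fundamental units $u_1,\dots,u_v$ of $O_{L_1}$ forces $p \mid (p-1) a_{j,\ell}$, hence $p \mid a_{j,\ell}$, for every $\ell$, so that $p \mid a_j \leq a_0 < p$, a contradiction.

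Once each $a_j$ is onto, the set of ``bad'' $\sigma \in H$ (those with $a_j(\sigma) = 0$ for some $j$) lies in a union of $r$ subgroups of index $p$ in $H$, and therefore has cardinality at most $r \cdot |H|/p < |H|$ since $p \geq c > r = r_2 + r_3$. A suitable $\sigma_0 \in H$ thus exists, and Chebotarev applied to $L_p/F$ at the conjugacy class of $\sigma_0$ completes the proof of $(*/p)$. The main obstacle is the ``no $p$-th power in $L_1(\zeta_p)$'' verification, especially in the unit case, where the assumption $p \geq k_0$ has to be combined via a norm computation with the structure of $O_{L_1}^*$ given by the fundamental units in order to succeed.
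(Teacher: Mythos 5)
Your proof is correct and follows essentially the same route as the paper: reduce to finding an element of $\Gal(L_p/L_1(e^{2i\pi/p}))$ moving every $\sqrt[p]{t_j}$, bound the exceptional set by $(r_2+r_3)/p<1$ using that no $t_j$ is a $p$-th power in $L_1(e^{2i\pi/p})$, which is proved via valuations for the non-unit roots and via the fundamental-unit decomposition (together with $p\geq k_0$) for the unit roots. The only local difference is that in the unit case you take a norm from $L_1(e^{2i\pi/p})$ down to $L_1$, whereas the paper first applies Capelli's lemma over $L_1$ and then passes to $L_1(e^{2i\pi/p})$ by coprimality of $p$ and $p-1$; both are valid.
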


Finally, we deal with the mixed case.

\begin{theorem} \label{thm 3}
Assume $P(0) \not=0$, $P(1) \not=0$, $P(T)$ has a root that is a root of unity and $P(T)$ has a root that is not a root of unity. Then, given a prime number $p_0 \geq c$, there exists an integer $A_0(p_0) \geq 1$ \footnote{See \S3.3.3 for the precise definition of $A_0(p_0)$.} such that condition {\rm{($*/{\rm{lcm}} (p_0, p_1^{A_0(p_0)}, \dots, p_{r_1}^{A_0(p_0)})$)}} of Proposition 1 holds for every $(p_1,\dots,p_{r_1}) \in \mathcal S_1 \times \cdots \times \mathcal S_{r_1}$.
\end{theorem}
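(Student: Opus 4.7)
The plan is to combine the strategies of Theorems \ref{thm 1} and \ref{thm 2}. Condition $(*/k)$ asserts that infinitely many primes $\mathcal{P}$ of $O_F$ are prime divisors of $P(T)$ but not of $P(T^k)$; equivalently, after reducing mod $\mathcal{P}$, some root $t_j$ of $P$ lies in $O_F/\mathcal{P}$ but no $t_j$ is a $k$-th power there. The structure of the chosen $k = {\rm lcm}(p_0, p_1^{A_0(p_0)}, \dots, p_{r_1}^{A_0(p_0)})$ is tailored so that the $p_i^{A_0(p_0)}$-factors handle the roots of unity $t_1, \dots, t_{r_1}$ (as in Theorem \ref{thm 1}) while the $p_0$-factor handles the remaining roots $t_{r_1+1}, \dots, t_r$ (as in Theorem \ref{thm 2}).

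First I would build an auxiliary Galois extension $M/F$ containing $L_1$, an appropriate cyclotomic layer $\Qq(\zeta_N)$ with $N = {\rm lcm}(p_0, p_1^{A_0(p_0)}, \dots, p_{r_1}^{A_0(p_0)})$, and the Kummer layer $L_1(t_{r_1+1}^{1/p_0}, \dots, t_{r_1+r_2}^{1/p_0})$. The hypothesis $p_0 \geq c \geq k_0$ yields linear disjointness of $\Qq(\zeta_{p_0})$ and $L_1$ over $\Qq$, and a large enough choice of $A_0(p_0)$ will give linear disjointness of the remaining cyclotomic and Kummer factors over $L_1$. Consequently $\Gal(M/L_1)$ will have a controlled direct-product structure in which cyclotomic and Kummer constraints are independent.

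Next, I would apply the Chebotarev density theorem to $M/F$ with respect to a carefully prescribed union of conjugacy classes: the restriction to $L_1$ must lie in $\Gal(L_1/F(t_j))$ for some $j$ (so that $P$ has a root modulo any $\mathcal{P}$ with Frobenius in the class), while the action on the cyclotomic and Kummer layers is arranged so that the selected $t_j$ is not a $k$-th power modulo $\mathcal{P}$. For $j \leq r_1$, non-$k$-th-powerness follows by exactly the cyclotomic argument of Theorem \ref{thm 1}, using a factor $p_i^{A_0(p_0)}$ with $p_i \in \mathcal{S}_j$. For $r_1 < j \leq r_1 + r_2$, non-$p_0$-th-powerness follows by the Kummer/fundamental-unit argument of Theorem \ref{thm 2}; the assumption $p_0 > a_0$ ensures that $t_j^{1/p_0} \notin L_1$. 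For $j > r_1 + r_2$, non-$p_0$-th-powerness follows from the valuation argument of \S2.4 via $p_0 > v_0$.

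The main obstacle is Step 3, the compatibility and density estimate: the restriction of Frobenius to $L_1$ that fixes some $t_j$ is partially constrained by requirement (a), so one must show that the residual constraints on $\Gal(M/L_1)$ can be met simultaneously with positive density. Linear disjointness reduces this to independent non-coverage statements on each factor of the product. The integer $A_0(p_0)$ is defined (along the lines of $A_0$ in \S2.2, with the constants $[F:\Qq]$, $r_1$ and $d_1$ replaced by quantities adjusted for the extra $p_0$-layer) precisely to guarantee that on each cyclotomic piece the union of ``fixing'' Frobenius classes fails to exhaust the corresponding Galois group. The remaining density computation then proceeds as in the proofs of Theorems \ref{thm 1} and \ref{thm 2}.
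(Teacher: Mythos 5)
Your high-level plan---handle $t_1,\dots,t_{r_1}$ by the cyclotomic mechanism of Theorem \ref{thm 1} and $t_{r_1+1},\dots,t_r$ by the Kummer/valuation mechanism of Theorem \ref{thm 2}, with $p_0\geq c$ guaranteeing $p_0\geq k_0$ and $p_0\nmid\mathrm{lcm}(a_0,v_0)$---is the paper's. But your Step 3, where the two constraints are combined, has a genuine gap, and that combination is precisely where the content of Theorem \ref{thm 3} lies. First, the claimed linear disjointness is false in general: for $p_i\in\mathcal{S}_j$ one has $p_i\mid n_j$ and $e^{2i\pi/n_j}\in L_1$, so $\Qq(e^{2i\pi/p_i^{A}})$ meets $L_1$ nontrivially no matter how large $A$ is. Enlarging $A$ does not decouple the cyclotomic layer from $L_1$; what it does is force the degree $[F(e^{2i\pi/(p_i^{A}n_j)}):F]$ to grow, hence the \emph{density} of the Frobenius elements fixing some $e^{2i\pi l/k}\sqrt[k]{t_j}$ to shrink (this is the role of Lemmas \ref{prime 2} and \ref{prime 3}). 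Second, and more seriously, ``independent non-coverage on each factor'' does not give what you need even if a product structure were available. Writing $B_1$ for the union of fixing sets attached to $t_1,\dots,t_{r_1}$ and $B_2$ for the one attached to $t_{r_1+1},\dots,t_r$, both contained in the good set $\mathcal{G}=\bigcup_{j}\Gal(L_k/F(t_j))$, you must produce an element of $\mathcal{G}\setminus(B_1\cup B_2)$; knowing separately that $B_1$ and $B_2$ each miss part of $\mathcal{G}$ is not sufficient, since their complements in $\mathcal{G}$ may be disjoint.

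The paper closes this gap quantitatively rather than by independence. It first fixes $p_0$ and proves $f_2(p_0)<1$ (part (3) of Theorem \ref{thm 4}); it then defines $A_0(p_0)$ by inserting the term $-\log(1-f_2(p_0))$ into the formula defining $A_0$, so that part (2) of Theorem \ref{thm 4} applied with $\epsilon=1-f_2(p_0)$ gives $f_1(k)<1-f_2(p_0)$ for $k=\mathrm{lcm}(p_0,p_1^{A_0(p_0)},\dots,p_{r_1}^{A_0(p_0)})$. Since $p_0\mid k$ forces $f_2(k)\leq f_2(p_0)$, the subadditive bound $f(k)\leq f_1(k)+f_2(k)$ yields $f(k)<1$, and part (1) of Theorem \ref{thm 4} concludes. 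Your definition of $A_0(p_0)$ must encode this explicit margin $1-f_2(p_0)$; ``adjusted for the extra $p_0$-layer'' does not pin down a working choice, and without the subadditivity-plus-margin argument the proof does not close.
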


\section{Proofs of Theorems \ref{thm 1}, \ref{thm 2} and \ref{thm 3}}

\subsection{Notation}

Given a positive integer $k$, denote
the splitting field of $P(T^k)$ over $F$ by $L_k$. For each $j \in \{1,\dots,r\}$, fix a $k$-th root
$\sqrt[k]{t_j}$ of $t_j$. If $r_1 >0$ and $j \in \{1,\dots,r_1\}$, we choose $\sqrt[k]{t_j} = e^{2i \pi m_j/(k \cdot n_j)}$. Finally, set

$$f(k)= \frac{|\bigcup_{j=1}^{r} \bigcup_{l=0}^{k-1} {\rm{Gal}}(L_k/F(e^{2i\pi l/k} \sqrt[k]{t_j}))|}{|\bigcup_{j=1}^{r} {\rm{Gal}}(L_k/F(t_j))|} \leq 1,$$
$$f_1(k)= \left \{ \begin{array} {ccc}
          \displaystyle{\frac{|\bigcup_{j=1}^{r_1} \bigcup_{l=0}^{k-1} {\rm{Gal}}(L_k/F(e^{2i\pi l/k} \sqrt[k]{t_j}))|}{|\bigcup_{j=1}^{r} {\rm{Gal}}(L_k/F(t_j))|}} & {\rm{if}} & r_1 >0 \\
          0 & {\rm{if}} &  r_1=0 ,\\   
          \end{array} \right.$$
$$f_2(k)= \left \{ \begin{array} {ccc}
          \displaystyle{\frac{|\bigcup_{j=r_1 +1}^{r} \bigcup_{l=0}^{k-1}{\rm{Gal}}(L_k/F(e^{2i\pi l/k} \sqrt[k]{t_j}))|}{|\bigcup_{j=1}^{r} {\rm{Gal}}(L_k/F(t_j))|}} & {\rm{if}} & r_2 + r_3 >0 \\
          0 & {\rm{if}} &  r_2 +r _3=0. \\   
          \end{array} \right.$$

\subsection{Statement of an auxiliary result}

Theorems \ref{thm 1}, \ref{thm 2} and \ref{thm 3} rest essentially on Theorem \ref{thm 4} below.

\begin{theorem} \label{thm 4}
Let $k$ be a positive integer.

\vspace{1mm}

\noindent
{\rm{(1)}} \cite[Lemma 2.2]{Leg16a} Condition {\rm{($*$/$k$)}} holds if and only if $f(k) <1$.

\vspace{1mm}

\noindent
{\rm{(2)}} Assume $r_1 >0$. Let $(p_1, \dots, p_{r_1}) \in \mathcal{S}_1 \times \cdots \times \mathcal{S}_{r_1}$, $\epsilon$ be a positive real number and $A$ a positive integer such that 
$$A > \frac{\log ([F:\Qq]) + \log(r_1) - \log(d_1) - \log(\epsilon)}{\log(p_{\rm{min}})}.$$
Then one has $f_1(k) < \epsilon$ if $k$ is a multiple of ${\rm{lcm}} (p_1^A, \dots, p_{r_1}^A)$. 

\vspace{1mm}

\noindent
{\rm{(3)}} {\hbox{Assume $r_2 + r_3 >0$. Then one has $f_2(k) <1$ if $k$ is a prime $\geq c$.}}
\end{theorem}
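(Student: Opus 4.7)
Part (1) is quoted from \cite{Leg16a}. For both (2) and (3) I begin with the same observation about the denominator: because each $t_j$ already lies in $L_1\subseteq L_k$, the subgroup $\Gal(L_k/F(t_j))$ is the full preimage of $\Gal(L_1/F(t_j))$ under the restriction $\Gal(L_k/F)\twoheadrightarrow\Gal(L_1/F)$, and the same holds for their union, so $|\bigcup_{j=1}^{r}\Gal(L_k/F(t_j))|=d_1\,[L_k:F]$. For Part (2), fix $(p_1,\dots,p_{r_1})\in\mathcal S_1\times\cdots\times\mathcal S_{r_1}$ and let $k$ be any multiple of ${\rm lcm}(p_1^A,\dots,p_{r_1}^A)$. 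For $j\le r_1$ and $l\in\{0,\dots,k-1\}$ the element $e^{2i\pi l/k}\sqrt[k]{t_j}=e^{2i\pi(ln_j+m_j)/(kn_j)}$ is a root of unity whose order $kn_j/\gcd(kn_j,ln_j+m_j)$ is divisible by $p_j^{A+1}$ (since $p_j\mid n_j$ and $p_j\nmid m_j$), so $F(e^{2i\pi/p_j^{A+1}})\subseteq F(e^{2i\pi l/k}\sqrt[k]{t_j})$ and
\[
\bigcup_{l=0}^{k-1}\Gal(L_k/F(e^{2i\pi l/k}\sqrt[k]{t_j}))\subseteq\Gal(L_k/F(e^{2i\pi/p_j^{A+1}})),
\]
a subgroup of order at most $[L_k:F][F:\Qq]/\varphi(p_j^{A+1})\le [L_k:F][F:\Qq]/p_{\min}^A$. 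Summing over $j\le r_1$ and dividing by $d_1[L_k:F]$ gives $f_1(k)\le r_1[F:\Qq]/(d_1\,p_{\min}^A)$, which is strictly less than $\epsilon$ by the hypothesis on $A$.

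For Part (3) the plan is to exhibit one $\sigma\in\Gal(L_p/F)$ lying in the denominator set of $f_2(p)$ but outside its numerator set. The first step is irreducibility: $X^p-t_j$ is irreducible over $L_1$ for every $j\ge r_1+1$. For $r_1+1\le j\le r_1+r_2$, a factorization $t_j=s^p$ with $s\in L_1$ forces $s\in O_{L_1}^\times$ by integral closure, and the uniqueness of the fundamental-unit decomposition would give $p\mid a_{j,i}$ for every $i$, hence $p\mid a_j$, contradicting $p>a_0$. For $j\ge r_1+r_2+1$, the valuation identity $v_{\mathcal P_j}(t_j)=p\cdot v_{\mathcal P_j}(s)$ would force $p\mid v_{\mathcal P_j}(t_j)$, contradicting $p>v_0$. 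Both inequalities are built into the definition of $c$, so Capelli's criterion delivers the irreducibility.

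The second step is a Kummer-theoretic reduction. Define $\phi:\Gal(L_p/L_1(e^{2i\pi/p}))\to(\Zz/p)^{r_2+r_3}$, $\sigma\mapsto(e_{j'}(\sigma))_{j'\ge r_1+1}$, by $\sigma(\sqrt[p]{t_{j'}})=e^{2i\pi e_{j'}(\sigma)/p}\sqrt[p]{t_{j'}}$; this is a homomorphism since $\sigma$ fixes $e^{2i\pi/p}$. Let $V$ be its image, an $\Ff_p$-subspace. The crux is that $V$ is contained in no coordinate hyperplane $H_{j'}=\{e_{j'}=0\}$: such a containment would mean $L_1(\sqrt[p]{t_{j'}})\subseteq L_1(e^{2i\pi/p})$, but $[L_1(\sqrt[p]{t_{j'}}):L_1]=p$ by the previous step, while $[L_1(e^{2i\pi/p}):L_1]$ divides $p-1$, so the intersection of these two subfields of $L_p$ has degree over $L_1$ dividing $\gcd(p,p-1)=1$ and thus equals $L_1$. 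Each $V\cap H_{j'}$ then has size $|V|/p$, and $p\ge c\ge r_2+r_3+1$ yields the standard $\Ff_p$-hyperplane-covering estimate $|\bigcup_{j'}V\cap H_{j'}|\le (r_2+r_3)|V|/p<|V|$; any $\sigma$ outside this union has $e_{j'}(\sigma)\ne 0$ for all $j'\ge r_1+1$, fixes $L_1$ and $e^{2i\pi/p}$ (hence lies in the denominator), yet satisfies $\sigma(e^{2i\pi l/p}\sqrt[p]{t_{j'}})\ne e^{2i\pi l/p}\sqrt[p]{t_{j'}}$ for all admissible $(j',l)$, placing it outside the numerator. The most delicate point is the non-containment of $V$ in each $H_{j'}$: the rest is Capelli bookkeeping or a standard $\Ff_p$-linear covering bound, but the Kummer-cyclotomic interaction must be kept under control, and it is precisely the coprimality $\gcd(p,p-1)=1$ that unlocks it.
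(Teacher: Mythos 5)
Your proof is correct and follows essentially the same route as the paper: part (2) by trapping each $F(e^{2i\pi l/k}\sqrt[k]{t_j})$ inside a cyclotomic field of large degree and using $[F(e^{2i\pi/n}):F]\geq \varphi(n)/[F:\Qq]$, and part (3) by Capelli irreducibility plus a union bound producing a $\sigma$ fixing $L_1(e^{2i\pi/p})$ but no $\sqrt[p]{t_j}$. Your Kummer-homomorphism/hyperplane-covering phrasing and your derivation of irreducibility over $L_1(e^{2i\pi/p})$ from irreducibility over $L_1$ via $\gcd(p,p-1)=1$ are only cosmetic repackagings of the paper's Lemmas \ref{prime 4}--\ref{prime 7}.
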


Theorem \ref{thm 4} is proved in \S3.4-5.

\subsection{Proofs of Theorems \ref{thm 1}, \ref{thm 2} and \ref{thm 3} under Theorem \ref{thm 4}}

\subsubsection{Proof of Theorem \ref{thm 1}}

Given $(p_1, \dots, p_{r_1}) \in \mathcal{S}_1 \times \cdots \times \mathcal{S}_{r_1}$, set $k={\rm{lcm}} (p_1^{A_0}, \dots, p_{r_1}^{A_0})$. By the definition of $A_0$, we may apply part (2) of Theorem \ref{thm 4} with $\epsilon=1$ and $A=A_0$ to get $f_1(k) <1$. As each root of $P(T)$ is a root of unity, one has $f_1(k)=f(k)$. Hence $f(k) < 1$. It then remains to apply part (1) of Theorem \ref{thm 4} to get Theorem \ref{thm 1}.

\subsubsection{Proof of Theorem \ref{thm 2}}

Assume that $p$ is a prime number $\geq c$. Then we may apply part (3) of Theorem \ref{thm 4} to get $f_2(p) <1$. As no root of unity is a root of $P(T)$, one has $f_2(p)=f(p)$. This gives $f(p) <1$. It then remains to apply part (1) of Theorem \ref{thm 4} to conclude.

\subsubsection{Proof of Theorem \ref{thm 3}}

Let $p_0$ be a prime number satisfying $p_0 \geq c$. We may then apply part (3) of Theorem \ref{thm 4} to get $f_2(p_0) <1$. Let
$A_0(p_0)$ be the smallest positive integer $A$ that satisfies
$$A > \frac{\log ([F:\Qq]) + \log(r_1) - \log(d_1) - \log(1-f_2(p_0))}{\log(p_{\rm{min}})}.$$ 

Given $(p_1,\dots,p_{r_1}) \in \mathcal{S}_1 \times \cdots \times \mathcal{S}_{r_1}$, set $k = {\rm{lcm}} (p_0, p_1^{A_0(p_0)}, \dots, p_{r_1}^{A_0(p_0)})$. By the definition of $A_0(p_0)$, we may apply part (2) of Theorem \ref{thm 4} with $\epsilon =1- f_2(p_0)$ and $A=A_0(p_0)$ to get $f_1(k) < 1- f_2(p_0)$. As $p_0$ divides $k$, one has $f_2(k) \leq f_2(p_0)$.
Hence
$$f(k) \leq f_1(k) + f_2(k) < 1- f_2(p_0) + f_2(p_0) = 1.$$
It then remains to apply part (1) of Theorem \ref{thm 4} to conclude.

\subsection{Proof of part (2) of Theorem \ref{thm 4}}

Denote the Euler function by $\varphi$. From now on, we assume that $r_1$ is positive.

Given an integer $n \geq 2$, set
$$h_k(n)  = \prod_{\substack{p | k \\ p | n}} p^{v_p(k)}.$$

The easy lemma below will be used in the sequel. 

\begin{lemma} \label{prime 1}
One has $\varphi(h_k(n) \cdot n)= h_k(n) \cdot \varphi(n)$ for each $n \geq 2$.
\end{lemma}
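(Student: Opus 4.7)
The plan is to use the standard Euler product formula
$$\varphi(m) = m \prod_{p \mid m} \left(1 - \frac{1}{p}\right),$$
where the product is taken over the prime divisors $p$ of $m$. The crucial observation is that $h_k(n)$ and $n$ share the same set of prime divisors when restricted to primes dividing $n$: indeed, by the very definition of $h_k(n)$, every prime factor of $h_k(n)$ is a prime that divides both $k$ and $n$, hence in particular divides $n$. Therefore the set of primes dividing $h_k(n) \cdot n$ is exactly the set of primes dividing $n$.

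Given this observation, I would simply compute
$$\varphi\bigl(h_k(n) \cdot n\bigr) = h_k(n) \cdot n \cdot \prod_{p \mid h_k(n) \cdot n} \left(1 - \frac{1}{p}\right) = h_k(n) \cdot n \cdot \prod_{p \mid n} \left(1 - \frac{1}{p}\right) = h_k(n) \cdot \varphi(n),$$
where the middle equality uses the previous observation that $h_k(n) \cdot n$ and $n$ have the same set of prime divisors, and the last equality is again the Euler product formula. This gives the lemma.

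Alternatively, one can argue by repeated application of the elementary identity $\varphi(pm) = p \cdot \varphi(m)$ valid whenever the prime $p$ divides $m$: factoring $h_k(n) = \prod_{p \mid k,\, p \mid n} p^{v_p(k)}$ and multiplying $n$ by these prime powers one at a time, each such multiplication only repeats a prime already appearing in $n$, so at each stage the identity $\varphi(pm)=p\,\varphi(m)$ applies. There is no real obstacle here; the statement is essentially just the remark that multiplying by a product of primes already dividing $n$ pulls out cleanly through $\varphi$.
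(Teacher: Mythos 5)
Your proof is correct: the key point, that every prime dividing $h_k(n)$ already divides $n$, is exactly right, and either of your two arguments (the Euler product, or iterating $\varphi(pm)=p\,\varphi(m)$ for $p\mid m$) settles the identity. The paper itself labels this an ``easy lemma'' and gives no proof, so your write-up simply supplies the standard argument the author had in mind.
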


\begin{lemma} \label{prime 2}
Given $j \in \{1,\dots,r_1\}$, one has
$$\bigcup_{l=0}^{k-1} {\rm{Gal}}(L_k/F(e^{2i\pi l/k} \sqrt[k]{t_j})) = {\rm{Gal}}(L_k/F( e^{2i \pi /(h_k(n_j) \cdot n_j)})).$$
\end{lemma}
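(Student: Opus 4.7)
The plan is to show that the union on the left-hand side is actually equal to one of its members, namely the one obtained for a well-chosen value of $l$, and that this distinguished member coincides with the cyclotomic subfield on the right.

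First I would rewrite each $k$-th root of $t_j$ as a single root of unity: with the normalization $\sqrt[k]{t_j}=e^{2i\pi m_j/(kn_j)}$, one has
$$e^{2i\pi l/k}\sqrt[k]{t_j}=e^{2i\pi(ln_j+m_j)/(kn_j)},$$
which is a primitive $N_l$-th root of unity with $N_l=kn_j/\gcd(kn_j,\,ln_j+m_j)$. Since $\gcd(m_j,n_j)=1$ forces $\gcd(n_j,\,ln_j+m_j)=1$, this simplifies to $N_l=kn_j/\gcd(k,\,ln_j+m_j)$, and accordingly $F(e^{2i\pi l/k}\sqrt[k]{t_j})=F(e^{2i\pi/N_l})$.

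The next step is to factor $k=k_1k_2$ with $k_1=h_k(n_j)$ (the part of $k$ supported on primes dividing $n_j$) and $k_2$ coprime to $n_j$. Every prime $p\mid k_1$ divides $n_j$ and thus does not divide $m_j$, so $p\nmid ln_j+m_j$; hence $\gcd(k_1,\,ln_j+m_j)=1$, giving
$$\gcd(k,\,ln_j+m_j)=\gcd(k_2,\,ln_j+m_j)=:e_l.$$
Since $n_j$ is invertible modulo $k_2$, the congruence $ln_j+m_j\equiv 0\pmod{k_2}$ has a solution $l_0\in\{0,\dots,k-1\}$; for this $l_0$ one has $e_{l_0}=k_2$ and therefore $N_{l_0}=k_1 n_j=h_k(n_j)\cdot n_j$. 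Thus the field $F(e^{2i\pi l_0/k}\sqrt[k]{t_j})$ equals $F(e^{2i\pi/(h_k(n_j)n_j)})$, which proves the inclusion $\supseteq$ in the lemma.

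For the reverse inclusion, for an arbitrary $l\in\{0,\dots,k-1\}$ the integer $e_l$ divides $k_2$, which is coprime to $k_1 n_j$; hence $N_l=k_1 n_j\cdot(k_2/e_l)$ is a multiple of $h_k(n_j)\cdot n_j$, so $e^{2i\pi/(h_k(n_j)n_j)}\in F(e^{2i\pi/N_l})=F(e^{2i\pi l/k}\sqrt[k]{t_j})$, whence
$$\Gal(L_k/F(e^{2i\pi l/k}\sqrt[k]{t_j}))\subseteq\Gal(L_k/F(e^{2i\pi/(h_k(n_j)n_j)})).$$
Taking the union over $l$ yields the claimed equality (with the remark that the union is in fact already a subgroup, equal to one of its members). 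The only point that requires any care is the coprimality argument that lets one separate the $k_1$- and $k_2$-parts of the gcd and then realize $e_l=k_2$; everything else is bookkeeping with orders of roots of unity.
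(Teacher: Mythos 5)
Your proof is correct and follows essentially the same route as the paper's: both reduce each $F(e^{2i\pi l/k}\sqrt[k]{t_j})$ to a cyclotomic field determined by $\gcd(ln_j+m_j,k)$, use $\gcd(m_j,n_j)=1$ to strip off the part of $k$ supported on primes dividing $n_j$, obtain the inclusion $\subseteq$ for every $l$, and produce the reverse inclusion by solving $ln_j+m_j\equiv 0$ modulo the complementary part of $k$ (your invertibility of $n_j$ modulo $k_2$ is the paper's Chinese Remainder Theorem step). Your explicit computation of the exact order $N_l$ is a mildly cleaner bookkeeping device, but the argument is the same.
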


\begin{proof}
Let $j \in \{1,\dots, r_1\}$ and $l \in \{0,\dots,k-1\}$. From our choice of $\sqrt[k]{t_j}$ and since ${\rm{gcd}}(m_j,n_j)=1$, one has 
\begin{align*}
F(e^{2i\pi l/k} \sqrt[k]{t_j}) = F(e^{2 i \pi (l n_j+m_j) / (k \cdot n_j)}) 
&= F(e^{2 i \pi \cdot {\rm{gcd}} (l n_j + m_j, k \cdot n_j) / (k \cdot n_j)}) \\
&= F(e^{2 i \pi \cdot {\rm{gcd}} (l n_j + m_j,k) / (k \cdot n_j)}).
\end{align*}
Obviously, ${\rm{gcd}} (l n_j + m_j,k)$ divides $k = h_k(n_j) \cdot ({k}/{h_k(n_j)}).$ As $m_j$ and $n_j$ are coprime, the same is true for ${\rm{gcd}} (l n_j + m_j,k)$ and $h_k(n_j)$. Hence ${\rm{gcd}} (l n_j + m_j,k)$ divides $k/h_k(n_j)$. Then
$$
F(e^{2i \pi /(h_k(n_j) \cdot n_j)}) = F(e^{2 i \pi (k /h_k(n_j)) / (k \cdot n_j)})
\subseteq F(e^{2 i \pi \cdot {\rm{gcd}} (l n_j + m_j,k) / (k \cdot n_j)}).$$
Hence $F(e^{2i \pi /(h_k(n_j) \cdot n_j)}) \subseteq F(e^{2i\pi l/k} \sqrt[k]{t_j}).$ This provides $$\bigcup_{l=0}^{k-1} {\rm{Gal}}(L_k/F(e^{2i\pi l/k} \sqrt[k]{t_j})) \subseteq {\rm{Gal}}(L_k/F( e^{2i \pi /(h_k(n_j) \cdot n_j)})).$$

For the converse, it suffices to find $l_0 \in \{0,\dots,k-1\}$ such that $k/h_k(n_j)$ divides ${\rm{gcd}}(l_0 n_j + m_j, k)$. First, assume that $k$ has a prime factor not dividing $n_j$. By the Chinese Remainder Theorem, there exists $l_0 \in \{0, \dots, k-1\}$ such that 
$l_0 \equiv -m_j/n_j \, \, {\rm{mod}} \, \, p^{v_{p}(k)}$
for each prime factor $p$ of $k$ not dividing $n_j$. Hence $k/h_k(n_j)$ divides $l_0 n_j + m_j$, as needed. Now, assume that each prime factor of $k$ divides $n_j$. One then has $k/h_k(n_j) = 1$, thus ending the proof.
\end{proof}

\begin{lemma} \label{prime 3}
One has 
$$f_1(k) \leq \sum_{j=1}^{r_1}  \frac{[F:\Qq]}{d_1 \cdot h_k(n_{j})}.$$
\end{lemma}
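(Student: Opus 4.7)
The plan is to reduce the expression for $f_1(k)$ via Lemma \ref{prime 2}, apply a union bound in the numerator, rewrite the denominator in terms of $d_1$ via restriction from $L_k$ to $L_1$, and finally bound the relevant cyclotomic degrees using Lemma \ref{prime 1}.

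First, I would use Lemma \ref{prime 2} to replace, for each $j \in \{1,\dots,r_1\}$, the inner union $\bigcup_{l=0}^{k-1} \Gal(L_k/F(e^{2i\pi l/k}\sqrt[k]{t_j}))$ by the single subgroup $\Gal(L_k/F(e^{2i\pi/(h_k(n_j) n_j)}))$. The numerator of $f_1(k)$ is then a union over $j \in \{1,\dots,r_1\}$ of these subgroups of $\Gal(L_k/F)$, and the standard union bound gives
$$\Bigl|\bigcup_{j=1}^{r_1} \Gal(L_k/F(e^{2i\pi/(h_k(n_j) n_j)}))\Bigr| \;\leq\; \sum_{j=1}^{r_1} \frac{|\Gal(L_k/F)|}{[F(e^{2i\pi/(h_k(n_j) n_j)}):F]}.$$

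For the denominator, I would use that each $t_j \in L_1 \subseteq L_k$, so $\sigma \in \Gal(L_k/F)$ fixes $t_j$ if and only if the restriction $\sigma|_{L_1}$ does. Hence $\bigcup_{j=1}^r \Gal(L_k/F(t_j))$ is exactly the preimage in $\Gal(L_k/F)$ of $\bigcup_{j=1}^r \Gal(L_1/F(t_j))$ under the surjection $\Gal(L_k/F) \twoheadrightarrow \Gal(L_1/F)$, so it has cardinality
$$|\Gal(L_k/L_1)| \cdot \Bigl|\bigcup_{j=1}^r \Gal(L_1/F(t_j))\Bigr| \;=\; d_1 \cdot |\Gal(L_1/F)| \cdot |\Gal(L_k/L_1)| \;=\; d_1 \cdot |\Gal(L_k/F)|,$$
by the very definition of $d_1$. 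Combining this with the numerator bound and cancelling $|\Gal(L_k/F)|$ yields
$$f_1(k) \;\leq\; \frac{1}{d_1} \sum_{j=1}^{r_1} \frac{1}{[F(e^{2i\pi/(h_k(n_j) n_j)}):F]}.$$

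Finally, for each $j$ the elementary estimate
$$[F(e^{2i\pi/(h_k(n_j) n_j)}):F] \;\geq\; \frac{[\Qq(e^{2i\pi/(h_k(n_j) n_j)}):\Qq]}{[F:\Qq]} \;=\; \frac{\varphi(h_k(n_j) n_j)}{[F:\Qq]}$$
combined with Lemma \ref{prime 1}, which gives $\varphi(h_k(n_j) n_j) = h_k(n_j) \cdot \varphi(n_j)$, and with the trivial bound $\varphi(n_j) \geq 1$ (recall $n_j \geq 2$), produces exactly $\sum_{j=1}^{r_1} \frac{[F:\Qq]}{d_1 \cdot h_k(n_j)}$ on the right-hand side.

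The only mildly delicate point is the denominator computation: one must keep track that passing from $L_1$ to $L_k$ scales both $|\bigcup_j \Gal(L_k/F(t_j))|$ and $|\Gal(L_k/F)|$ by the same factor $|\Gal(L_k/L_1)|$, so the ratio defining $d_1$ is preserved. Everything else is a direct union bound plus the cyclotomic degree estimate.
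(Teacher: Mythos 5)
Your proof is correct and follows essentially the same route as the paper's: reduce via Lemma \ref{prime 2}, apply the union bound, rewrite the denominator as $d_1\cdot|\Gal(L_k/F)|$, and finish with the cyclotomic degree estimate and Lemma \ref{prime 1}. The only difference is that you spell out the denominator computation via the restriction map $\Gal(L_k/F)\twoheadrightarrow\Gal(L_1/F)$, which the paper compresses into ``by the definitions of $f_1(k)$ and $d_1$''.
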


\begin{proof}
By  Lemma \ref{prime 2} and the definitions of $f_1(k)$ and $d_1$, one has 
\begin{align*}
f_1(k) &=\frac{|\bigcup_{j=1}^{r_1} {\rm{Gal}}(L_k/F( e^{2i \pi /(h_k(n_j) \cdot n_j)}))|}{d_1 \cdot |\Gal(L_k/F)|}  \\
&\leq  \sum_{j=1}^{r_1} \frac{|{\rm{Gal}}(L_k/F(e^{2i \pi /(h_k(n_{j}) \cdot n_{j})}))|}{d_1 \cdot |{\rm{Gal}}(L_k/F)|} \\
&=\sum_{j=1}^{r_1} \frac{1}{d_1 \cdot [F(e^{2i \pi /(h_k(n_{j}) \cdot n_{j})}):F]}.
\end{align*}
For every positive integer $n$, one has $$[F(e^{2 i \pi /n}) : F] \geq \frac{[\Qq(e^{2 i \pi /n}):\Qq]}{[F:\Qq]} = \frac{\varphi(n)}{[F:\Qq]}.$$
We then get
$$f_1(k) \leq \sum_{j=1}^{r_1}  \frac{[F:\Qq]}{d_1 \cdot \varphi(h_k(n_{j}) \cdot n_{j})}.$$
It then remains to apply Lemma \ref{prime 1} to finish the proof.
\end{proof}

Let $(p_1, \dots, p_{r_1}) \in \mathcal{S}_1 \times \cdots \times \mathcal{S}_{r_1}$ and $\epsilon >0$. Given an integer $A \geq 1$, assume ${\rm{lcm}}  (p_1^A, \dots, p_{r_1}^A) \vert k$. One then has
$h_k(n_{j}) \geq p_{j}^{v_{p_{j}}(k)} \geq p_{j}^A \geq p_{\rm{min}}^A$
for each $j \in \{1,\dots,r_1\}$. Then apply Lemma \ref{prime 3} to get 
$$f_1(k) \leq \sum_{j=1}^{r_1} \frac{[F: \Qq]}{d _1 \cdot p_{\rm{min}}^A} = \frac{[F: \Qq] \cdot r_1}{d_1 \cdot p_{\rm{min}}^A}.$$
It then suffices to take $$A > \frac{\log ([F:\Qq]) + \log(r_1) - \log(d_1) - \log(\epsilon)}{\log(p_{\rm{min}})}$$
to get $f_1(k) < \epsilon$, thus ending the proof of part (2) of Theorem \ref{thm 4}. 

\subsection{Proof of part (3) of Theorem \ref{thm 4}}

Assume $r_2 + r_3 >0$.

\subsubsection{Refining the condition $f_2(k) <1$}

\begin{lemma} \label{prime 4}
One has $f_2(k) < 1$ if $$g_2(k):=\frac{|\bigcup_{j=r_1+1}^{r} {\rm{Gal}}(L_k/L_1(e^{2i\pi/k}, \sqrt[k]{t_j}))|}{|{\rm{Gal}}(L_k/L_1(e^{2i\pi/k}))|}<1.$$
\end{lemma}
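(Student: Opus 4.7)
The plan is to extract, from the hypothesis $g_2(k)<1$, a single element of $\Gal(L_k/F)$ that witnesses the strict inequality $f_2(k)<1$. First I would record the easy containment that makes this strategy work: for each $j\in\{r_1+1,\dots,r\}$ and each $l\in\{0,\dots,k-1\}$, the field $F(e^{2i\pi l/k}\sqrt[k]{t_j})$ contains $t_j=(e^{2i\pi l/k}\sqrt[k]{t_j})^k$, so
$$\Gal(L_k/F(e^{2i\pi l/k}\sqrt[k]{t_j}))\;\subseteq\;\Gal(L_k/F(t_j))\;\subseteq\;\bigcup_{i=1}^{r}\Gal(L_k/F(t_i)).$$
Thus the numerator set defining $f_2(k)$ is contained in the denominator set, so $f_2(k)\leq 1$ automatically, and it suffices to produce one element of the denominator set that escapes the numerator set.

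Next, I would unpack $g_2(k)<1$ at the level of elements: by definition there exists $\tau\in\Gal(L_k/L_1(e^{2i\pi/k}))$ such that $\tau\notin\Gal(L_k/L_1(e^{2i\pi/k},\sqrt[k]{t_j}))$ for every $j\in\{r_1+1,\dots,r\}$, equivalently $\tau(\sqrt[k]{t_j})\neq\sqrt[k]{t_j}$ for each such $j$. This $\tau$ is my candidate. On the one hand, since $\tau$ fixes $L_1\supseteq F(t_1)$, it belongs to $\Gal(L_k/F(t_1))$ and hence to the denominator union. On the other hand, $\tau$ also fixes the primitive $k$-th root of unity $e^{2i\pi/k}$, hence fixes every $e^{2i\pi l/k}$; this yields
$$\tau(e^{2i\pi l/k}\sqrt[k]{t_j})\;=\;e^{2i\pi l/k}\,\tau(\sqrt[k]{t_j})\;\neq\;e^{2i\pi l/k}\sqrt[k]{t_j}$$
for every $j\in\{r_1+1,\dots,r\}$ and every $l\in\{0,\dots,k-1\}$. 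Therefore $\tau$ lies in no subgroup $\Gal(L_k/F(e^{2i\pi l/k}\sqrt[k]{t_j}))$ in the numerator union, and $f_2(k)<1$ follows.

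There is no substantial obstacle; this lemma is essentially a bookkeeping reduction. The only point requiring any care is recognizing why the auxiliary field $L_1(e^{2i\pi/k})$ is the right denominator in $g_2(k)$: fixing $L_1$ guarantees that $\tau$ fixes each $t_j$ (so that $\tau$ sits in the denominator union for $f_2(k)$), while fixing $e^{2i\pi/k}$ guarantees that $\tau$ commutes with multiplication by each $e^{2i\pi l/k}$, so that the single relation $\tau(\sqrt[k]{t_j})\neq\sqrt[k]{t_j}$ propagates to all $k$ companion roots and rules out membership in the full numerator union.
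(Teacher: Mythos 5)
Your proof is correct and follows essentially the same route as the paper: both arguments take a $\tau\in\Gal(L_k/L_1(e^{2i\pi/k}))$ outside $\bigcup_{j}\Gal(L_k/L_1(e^{2i\pi/k},\sqrt[k]{t_j}))$ and observe that it lies in the denominator union of $f_2(k)$ but in none of the groups $\Gal(L_k/F(e^{2i\pi l/k}\sqrt[k]{t_j}))$. You merely spell out the two containments that the paper leaves implicit (fixing $L_1$ puts $\tau$ in $\Gal(L_k/F(t_j))$, and fixing $e^{2i\pi/k}$ propagates $\tau(\sqrt[k]{t_j})\neq\sqrt[k]{t_j}$ to all $k$ conjugate roots), which is a faithful elaboration rather than a different argument.
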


\begin{proof}
Assume that there exists some $\sigma$ in 
${\rm{Gal}}(L_k/L_1(e^{2i \pi /k}))$ which is not in $\bigcup_{j=r_1 +1}^{r} {\rm{Gal}}(L_k/L_1(e^{2i\pi/k}, \sqrt[k]{t_j}))$. Then such an element $\sigma$ lies in 
$$\bigcup_{j=r_1 + 1}^{r} {\rm{Gal}}(L_k/F(t_j)) \, \setminus \, \bigcup_{j=r_1 + 1}^{r} \bigcup_{l=0}^{k-1} {\rm{Gal}}(L_k/F(e^{2i\pi l/k} \sqrt[k]{t_j})).$$
This provides $f_2(k) <1$, as needed for the lemma.
\end{proof}

Next, we need the following conditional bound.

\begin{lemma} \label{prime 5}
Assume that the polynomials $T^k-t_{r_1 + 1}, \dots, T^k - t_{r_ 1 +r_2}, T^k- t_{r_1 +r_2+1}, \dots, T^k-t_{r}$ all are irreducible over $L_1(e^{2i\pi/k})$. Then one has
$$g_2(k) \leq \frac{r_2+r_3}{k}.$$
\end{lemma}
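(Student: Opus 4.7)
The plan is to bound the numerator of $g_2(k)$ by summing the sizes of the individual subgroups and then use the irreducibility hypothesis to compute each term precisely.

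First I would verify the basic inclusion $L_1(e^{2i\pi/k}, \sqrt[k]{t_j}) \subseteq L_k$ for each $j \in \{r_1+1,\dots,r\}$. This is immediate: $L_k$ contains $\sqrt[k]{t_j}$ (a root of $P(T^k)$) and hence contains $t_j = (\sqrt[k]{t_j})^k$, giving $L_1 \subseteq L_k$; and since $t_j \neq 0$, the splitting field of $T^k - t_j$ over $L_1$ lies in $L_k$, so the ratios of any two $k$-th roots of $t_j$ provide $e^{2i\pi/k} \in L_k$.

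Next I would translate the irreducibility assumption into a group-theoretic index. Since $T^k - t_j$ is assumed irreducible over $L_1(e^{2i\pi/k})$, Galois theory gives
\[
[L_1(e^{2i\pi/k}, \sqrt[k]{t_j}) : L_1(e^{2i\pi/k})] = k,
\]
so
\[
\frac{|\mathrm{Gal}(L_k/L_1(e^{2i\pi/k}, \sqrt[k]{t_j}))|}{|\mathrm{Gal}(L_k/L_1(e^{2i\pi/k}))|} = \frac{1}{k}
\]
for every $j \in \{r_1+1,\dots,r\}$.

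Finally, I would conclude by a straightforward union bound. Since there are exactly $r_2 + r_3$ indices $j$ in the range $\{r_1+1,\dots,r\}$,
\[
g_2(k) \;=\; \frac{\bigl|\bigcup_{j=r_1+1}^{r} \mathrm{Gal}(L_k/L_1(e^{2i\pi/k}, \sqrt[k]{t_j}))\bigr|}{|\mathrm{Gal}(L_k/L_1(e^{2i\pi/k}))|}
\;\leq\; \sum_{j=r_1+1}^{r} \frac{1}{k} \;=\; \frac{r_2+r_3}{k},
\]
as required. There is no real obstacle here: the content of the lemma is entirely in checking that the irreducibility hypothesis lets us replace a union bound by exact index computations, so the only thing to be careful about is confirming that $L_1(e^{2i\pi/k})$ itself sits inside $L_k$ so that the Galois-theoretic index formula applies.
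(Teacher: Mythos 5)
Your proof is correct and follows essentially the same route as the paper's: a union bound on the numerator of $g_2(k)$ followed by the observation that irreducibility of $T^k-t_j$ over $L_1(e^{2i\pi/k})$ forces each index $[L_1(e^{2i\pi/k},\sqrt[k]{t_j}):L_1(e^{2i\pi/k})]$ to equal $k$. The only difference is that you explicitly check the containment $L_1(e^{2i\pi/k},\sqrt[k]{t_j})\subseteq L_k$, which the paper leaves implicit.
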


\begin{proof}
By the definition of $g_2(k)$, one has 
\begin{align*}
g_2(k) &\leq \sum_{j=r_1 + 1}^{r} \frac{|{\rm{Gal}}(L_k/L_1(e^{2i\pi/k}, \sqrt[k]{t_{j}}))|}{|{\rm{Gal}}(L_k/L_1(e^{2i\pi/k}))|} \\
&= \sum_{j=r_1+1}^{r}  \frac{1}{[L_1(e^{2i\pi/k}, \sqrt[k]{t_{j}}):L_1(e^{2i\pi/k})]}.
\end{align*}
For each $j \in \{r_1+1, \dots,r\}$, one has 
$[L_1(e^{2i\pi/k}, \sqrt[k]{t_{j}}):L_1(e^{2i\pi/k})]= k$
as $T^k-t_{j}$ is irreducible over $L_1(e^{2i\pi/k})$. We then get
$$g_2(k) \leq \sum_{j=r_1+1}^{r} \frac{1}{k}= \frac{r-r_1}{k},$$
thus ending the proof.
\end{proof}

\subsubsection{On the irreducibility of the polynomials $T^k-t_{r_1+1}, \dots, T^k - t_{r_1 + r_2}, T^k- t_{r_1 + r_2+1}, \dots,$ $T^k-t_{r}$.} We start with the case where $t_j$ is a unit of $O_{L_1}$.

\begin{lemma} \label{prime 6}
Assume $r_2 >0$ and let $j \in \{r_1+1, \dots,r_1 + r_2\}$. The polynomial $T^k-t_j$ is irreducible over $L_1(e^{2 i \pi /k})$ if $k$ is a prime number $\geq k_0$ not dividing $a_j$.
\end{lemma}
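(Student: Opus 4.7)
The plan is to reduce the irreducibility of $T^k - t_j$ over $K := L_1(e^{2i\pi/k})$ to the statement that $t_j$ is not a $k$-th power in $K$, and then to descend that condition to $L_1$ where the fundamental-unit decomposition can be used to extract the arithmetic content.

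First, since $k$ is a prime $\geq k_0$, the choice of $k_0$ ensures that $\mathbb{Q}(e^{2i\pi/k})$ and $L_1$ are linearly disjoint over $\mathbb{Q}$, hence $[K:L_1] = \varphi(k) = k-1$. By the Vahlen--Capelli criterion applied to the prime exponent $k$, $T^k - t_j$ is irreducible over $K$ if and only if $t_j \notin K^k$. I would therefore argue by contradiction: suppose $t_j = w^k$ for some $w \in K$.

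Set $N := N_{K/L_1}$. Since $t_j$ lies in $L_1$ and $[K:L_1] = k-1$, one has $N(w)^k = N(t_j) = t_j^{k-1}$, and because $\gcd(k,k-1) = 1$ the identity
$$ t_j = t_j^k \cdot t_j^{-(k-1)} = t_j^k \cdot N(w)^{-k} = \bigl(t_j \cdot N(w)^{-1}\bigr)^k $$
exhibits $t_j$ as a $k$-th power in $L_1$ itself; write $t_j = s^k$ with $s \in L_1$. As $t_j$ is a unit of $O_{L_1}$, so is $s$, and expanding $s = \zeta' \cdot u_1^{b_1} \cdots u_v^{b_v}$ in the chosen system of fundamental units and raising to the $k$-th power gives
$$ t_j = (\zeta')^k \cdot u_1^{k b_1} \cdots u_v^{k b_v}. $$
Comparing with $t_j = \zeta_j \cdot u_1^{a_{j,1}} \cdots u_v^{a_{j,v}}$ and invoking the uniqueness in the fundamental-unit representation, one gets $a_{j,i} = k b_i$ for every $i \in \{1,\dots,v\}$; thus $k$ divides each $|a_{j,i}|$, hence divides their gcd $a_j$, contradicting $k \nmid a_j$.

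The only point that requires care is the descent from $k$-th powers in $K$ to $k$-th powers in $L_1$, which is exactly what the norm argument achieves: it succeeds because $\gcd([K:L_1],k) = \gcd(k-1,k) = 1$, and this coprimality is guaranteed precisely by the linear disjointness built into the condition $k \geq k_0$.
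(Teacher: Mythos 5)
Your proof is correct and rests on exactly the same ingredients as the paper's: the Capelli criterion for prime exponent $k$, the comparison of exponents in the fundamental-unit decomposition to contradict $k \nmid a_j$, and the coprimality of $k$ with $[L_1(e^{2i\pi/k}):L_1]=k-1$ guaranteed by $k \geq k_0$. The only (interchangeable) difference is the direction of the passage between $L_1$ and $L_1(e^{2i\pi/k})$: the paper first proves irreducibility over $L_1$ and then lifts it by linear disjointness of the degrees $k$ and $k-1$, whereas you apply Capelli directly over $L_1(e^{2i\pi/k})$ and descend the $k$-th-power condition to $L_1$ by the norm argument.
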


\begin{proof}
First, assume that $T^k-t_j$ is reducible over $L_1$. By the Capelli lemma \cite[Chapter VI, \S9, Theorem 9.1]{Lan02} and as $k$ is a prime number, there exists $x \in O_{L_1}$ such that 
$
t_j=x^k.
$
As $t_j$ is a unit of $O_{L_1}$, the same is true for $x$. Set 
$$
x= \zeta' \cdot u_1^{w_1} \cdots u_{v}^{w_{v}},
$$ where $\zeta'$ is a root of unity and ${w_1}, \dots, w_{v}$ are integers. We then get $$\zeta_j \cdot u_1^{a_{j,1}} \cdots u_{v}^{a_{j,v}} = t_j=x^{k}=\zeta'^{k} \cdot u_1^{k \cdot w_1} \cdots u_{v}^{k \cdot w_{v}}.$$ In particular, we get $a_{j,l}= k \cdot w_l$ for each $l \in \{1,\dots,v\}$. Then $k$ divides $a_j$, which cannot happen. Hence $T^k-t_j$ is irreducible over $L_1$.

Now, we show that $T^k-t_j$ is irreducible over $L_1(e^{2 i \pi / k})$. By the definition of $k_0$ and as $k$ is a prime number $\geq k_0$, the fields $L_1$ and $\Qq(e^{2 i \pi / k})$ are linearly disjoint over $\Qq$, {\it{i.e.}}, one has 
$$[L_1(e^{2 i \pi / k}):L_1]=[\Qq(e^{2 i \pi / k}):\Qq]=k-1$$ (since $k$ is a prime number). By the above, one has $[L_1(\sqrt[k]{t_j}):L_1]=k.$ Since $k$ and $k-1$ are coprime, the fields $L_1(\sqrt[k]{t_j})$ and $L_1(e^{2 i \pi / k})$ are linearly disjoint over $L_1$. Hence we get $$[L_1(e^{2 i \pi / k},\sqrt[k]{t_j}): L_1(e^{2 i \pi / k})]=[L_1(\sqrt[k]{t_j}):L_1]=k,$$ as needed for the lemma.
\end{proof}

Now, we consider the case where $t_j$ is not a unit of $O_{L_1}$.

\begin{lemma} \label{prime 7}
Suppose $r_3 >0$ and let $j \in \{r_1 + r_2+1, \dots, r\}$. Then $T^k-t_j$ is irreducible over $L_1(e^{2 i \pi /k})$ if $k$ is a prime not dividing $v_{\mathcal{P}_j}(t_j)$.
\end{lemma}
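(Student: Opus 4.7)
The plan is to imitate the two-step structure of the proof of Lemma \ref{prime 6}. First I would establish that $T^k - t_j$ is irreducible over $L_1$, and then I would promote this irreducibility to the cyclotomic extension $L_1(e^{2i\pi/k})$.

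For the first step, I would apply the Capelli lemma exactly as in Lemma \ref{prime 6}: since $k$ is prime, $T^k - t_j$ is reducible over $L_1$ if and only if $t_j$ admits a $k$-th root in $L_1$. If there were some $x \in L_1$ with $t_j = x^k$, then applying the valuation $v_{\mathcal{P}_j}$ would give $v_{\mathcal{P}_j}(t_j) = k \cdot v_{\mathcal{P}_j}(x)$, so $k$ would divide $v_{\mathcal{P}_j}(t_j)$, contradicting the hypothesis on $k$. Hence $[L_1(\sqrt[k]{t_j}) : L_1] = k$. This is the analogue for non-unit $t_j$ of the fundamental-units computation of Lemma \ref{prime 6}, and it is precisely the reason the prime $\mathcal{P}_j$ was chosen in \S2.4.

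For the second step, notice that the assumption $k \geq k_0$ available in Lemma \ref{prime 6} is no longer present, but it turns out to be unnecessary here. The degree $[L_1(e^{2i\pi/k}) : L_1]$ divides $[\Qq(e^{2i\pi/k}) : \Qq] = k - 1$, which is coprime to the prime $k = [L_1(\sqrt[k]{t_j}) : L_1]$ obtained in the first step. Therefore the extensions $L_1(\sqrt[k]{t_j})/L_1$ and $L_1(e^{2i\pi/k})/L_1$ have coprime degrees and are automatically linearly disjoint over $L_1$. This yields $[L_1(e^{2i\pi/k}, \sqrt[k]{t_j}) : L_1(e^{2i\pi/k})] = k$, i.e., $T^k - t_j$ is irreducible over $L_1(e^{2i\pi/k})$, as desired.

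I do not anticipate a serious obstacle: the valuation argument cleanly forbids $k$-th roots of $t_j$ in $L_1$, replacing the fundamental-units bookkeeping of Lemma \ref{prime 6}, while the promotion step reduces to a routine comparison of degrees. The only mild subtlety is that, unlike the unit case, linear disjointness from the cyclotomic field here comes for free from coprimality of degrees rather than from the auxiliary constant $k_0$.
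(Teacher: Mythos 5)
Your proof is correct, and it reaches the conclusion by a slightly different route than the paper. You follow the two-step template of Lemma \ref{prime 6}: first irreducibility over $L_1$ (via Capelli and the identity $v_{\mathcal{P}_j}(t_j)=k\cdot v_{\mathcal{P}_j}(x)$), then passage to $L_1(e^{2i\pi/k})$ by observing that $[L_1(e^{2i\pi/k}):L_1]$ divides $k-1$ and is therefore coprime to $k=[L_1(\sqrt[k]{t_j}):L_1]$, forcing linear disjointness. The paper instead argues in one step directly over $L_1(e^{2i\pi/k})$: it applies Capelli there to get $x$ with $t_j=x^k$ in $O_{L_1(e^{2i\pi/k})}$, picks a prime $\mathcal{Q}_j$ above $\mathcal{P}_j$, and derives $e_j\cdot v_{\mathcal{P}_j}(t_j)=k\cdot v_{\mathcal{Q}_j}(x)$ with $e_j\leq k-1$ the ramification index, so that the prime $k$ must divide $v_{\mathcal{P}_j}(t_j)$. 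Both arguments ultimately exploit the same fact --- that the cyclotomic extension has degree at most $k-1$, hence prime to $k$ --- but you use it through field degrees and linear disjointness while the paper uses it through ramification indices of $\mathcal{P}_j$. Your version has the advantage of structural parallelism with Lemma \ref{prime 6} and avoids any mention of ramification; the paper's is marginally more self-contained in that it never needs the linear-disjointness-from-coprime-degrees fact. Your observation that $k_0$ plays no role here is also consistent with the paper's statement and proof.
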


\begin{proof}
Assume that $T^k-t_j$ is reducible over $L_1(e^{2 i \pi /k})$. By the Capelli lemma and as $k$ is a prime, there exists $x \in O_{L_1(e^{2 i \pi /k})}$ such that $t_j=x^{k}.$ Let $\mathcal{Q}_j$ be a non-zero prime ideal of $O_{L_1(e^{2 i \pi /k})}$ lying over $\mathcal{P}_j$. One has
$$v_{\mathcal{Q}_j}(t_j) = k \cdot  v_{\mathcal{Q}_{j}}(x).$$ This gives $e_j \cdot v_{\mathcal{P}_j}(t_j) = k \cdot  v_{\mathcal{Q}_{j}}(x)$, where $e_j$ is the ramification index of $\mathcal{P}_j$ in $L_1(e^{2 i \pi /k})/L_1$. As $v_{{\mathcal{P}_{j}}}(t_j)$ is a positive integer, this is also true for $v_{\mathcal{Q}_j}(x)$. Hence the prime $k$ divides $e_j$ or $v_{{\mathcal{P}_{j}}}(t_j)$. As $e_j \leq k-1$, the prime $k$ has to divide $ v_{{\mathcal{P}_{j}}}(t_j)$, which cannot happen.
\end{proof}

\subsubsection{Conclusion}
For simplicity, assume $r_2 >0$ and $r_3 >0$ (the other two cases are similar). Suppose $k$ is a prime number $\geq c$. As $k$ satisfies $k \geq k_0$ and $k {\not \vert} \, {\rm{lcm}} (a_0,v_0)$, one may apply Lemmas \ref{prime 6} and \ref{prime 7} to get that the polynomials $T^k-t_{r_1+1}, \dots, T^k-t_{r_1+r_2}, T^k-t_{r_1+r_2+1}, \dots, T^k-t_{r}$ all are irreducible over $L_1(e^{2 i \pi / k})$. Then, by Lemma \ref{prime 5} and since $k > r_2+r_3$, we get $g_2(k) <1$. It then remains to apply Lemma \ref{prime 4} to finish the proof of part (3) of Theorem \ref{thm 4}.

\begin{remark}
More generally, the proof shows that the condition $g_2(k) <1$ (and then the condition $f_2(k) <1$ too) holds if $k$ is a prime such that

\noindent
- $k \geq {\rm{max}} \, (r_2+r_3+1,k_0)$ and $k {\not \vert} \, {\rm{lcm}}  (a_0,v_0)$ if $r_2 >0$ and $r_3 >0$,

\noindent
- $k \geq r_3+1$ and $k {\not \vert} \, v_0$ if $r_2 =0$,

\noindent
- $k \geq {\rm{max}} \, (r_2+1,k_0)$ and $k {\not \vert} \, a_0$ if $r_3=0$.
\end{remark}

\section{On the converse of Theorem \ref{thm 2}}

In Propositions \ref{converse} and \ref{converse 2} below, we show that the conclusion of Theorem \ref{thm 2} does not hold in general if $P(T)$ has a root that is a root of unity. This suggests that our strategy to handle the roots of $P(T)$ that are not roots of unity, which leads to a better conclusion in Theorem \ref{thm 2}, cannot be extended to the case of roots of unity.

\begin{proposition} \label{converse}
Assume that the following condition holds:

\vspace{1mm}

\noindent
${\rm{(H)}} \bigcup_{j=r_1+1}^{r} \Gal(L_1/F(t_j)) \subset \bigcup_{j=1}^{r_1} \Gal(L_1/F(t_j)).$

\vspace{1mm}

\noindent
Then $r_1 >0$ and condition {\rm{($*$/$k$)}} of Proposition 1 fails for each positive integer $k$ which is coprime to ${\rm{lcm}}(n_1, \dots,n_{r_1})$. In particular, the conclusion of Theorem \ref{thm 2} does not hold.
\end{proposition}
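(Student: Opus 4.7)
The plan is to apply Theorem \ref{thm 4}(1): under hypothesis (H) and the coprimality of $k$ with $\mathrm{lcm}(n_1,\dots,n_{r_1})$, I would show $f(k)=1$, which is precisely the failure of (*/k). First I would dispose of the assertion that $r_1>0$: otherwise the right-hand side of (H) is empty while its left-hand side contains the identity of $\Gal(L_1/F)$ (since $r=r_2+r_3\geq 1$), a contradiction. Next, the inclusion
$$\bigcup_{j=1}^{r}\bigcup_{l=0}^{k-1}\Gal(L_k/F(e^{2i\pi l/k}\sqrt[k]{t_j}))\ \subseteq\ \bigcup_{j=1}^{r}\Gal(L_k/F(t_j))$$
is automatic from $t_j=(e^{2i\pi l/k}\sqrt[k]{t_j})^{k}$, so only the reverse inclusion carries content.

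I would pick any $\sigma$ in the right-hand set, say $\sigma\in\Gal(L_k/F(t_j))$, and split according to whether $t_j$ is a root of unity. If $j\in\{1,\dots,r_1\}$, coprimality of $k$ with $n_j$ forces $h_k(n_j)=1$, so Lemma \ref{prime 2} reads
$$\bigcup_{l=0}^{k-1}\Gal(L_k/F(e^{2i\pi l/k}\sqrt[k]{t_j}))=\Gal(L_k/F(e^{2i\pi/n_j}))=\Gal(L_k/F(t_j)),$$
the last equality using $\gcd(m_j,n_j)=1$; hence $\sigma$ does lie in the left-hand set. If instead $j\in\{r_1+1,\dots,r\}$, restricting $\sigma$ to $L_1$ gives an element of $\Gal(L_1/F(t_j))$, and (H) furnishes some $j'\in\{1,\dots,r_1\}$ with $\sigma|_{L_1}\in\Gal(L_1/F(t_{j'}))$; then $\sigma$ fixes $t_{j'}$, so $\sigma\in\Gal(L_k/F(t_{j'}))$, and the previous case applied to $j'$ concludes.

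Combining the two cases yields $f(k)=1$, hence (*/k) fails by Theorem \ref{thm 4}(1). The final ``in particular'' statement then follows from the existence of arbitrarily large primes coprime to $\mathrm{lcm}(n_1,\dots,n_{r_1})$, for each of which (*/p) fails, contradicting the conclusion of Theorem \ref{thm 2}. The only conceptual step is the second case, where (H) is precisely what reroutes a $t_j$-fixator (for $j>r_1$) to a $t_{j'}$-fixator with $j'\leq r_1$; the rest is the formal manipulation from Lemma \ref{prime 2}, which becomes sharp exactly thanks to the coprimality of $k$ with the $n_{j'}$.
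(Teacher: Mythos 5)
Your proof is correct and follows essentially the same route as the paper's: both establish $f(k)=1$ via Lemma \ref{prime 2} together with $h_k(n_j)=1$ for the roots of unity, and use (H) to reroute the fixators of the remaining roots into $\bigcup_{j\leq r_1}\Gal(L_k/F(t_j))$, then invoke Theorem \ref{thm 4}(1). The only difference is presentational (element-wise case analysis versus chained inclusions of unions), and your justification of $r_1>0$ is a slightly more explicit version of the paper's remark that $P(T)$ is not constant.
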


\noindent
For example, condition (H) holds in each of the following situations:

\vspace{0.5mm}

\noindent
{\rm{(1)}} each root of $P(T)$ is a root of unity,

\vspace{0.5mm}

\noindent
{\rm{(2)}} $P(T)$ has a root that is a root of unity and that is in $F$.

\vspace{0.5mm}

\noindent
Indeed, in case (1), the left-hand side in condition (H) is empty while, in case (2), the right-hand side is equal to the whole group ${\rm{Gal}}(L_1/F)$.

\begin{proof}
Assume that condition ${\rm{(H)}}$ holds. Then one has $r_1>0$ as the polynomial $P(T)$ is not constant. Let $k \geq 1$ be an integer such that $k$ and ${\rm{lcm}}(n_1, \dots,n_{r_1})$ are coprime. By Lemma \ref{prime 2}, one has
$$\bigcup_{j=1}^{r_1} \bigcup_{l=0}^{k-1} {\rm{Gal}}(L_k/F(e^{2i\pi l/k} \sqrt[k]{t_j})) = \bigcup_{j=1}^{r_1} {\rm{Gal}}(L_k/F( e^{2i \pi /(h_k(n_j) \cdot n_j)})).$$
By the assumption on $k$, one has $$h_k(n_1)= \cdots= h_k(n_{r_1}) =1.$$ This gives
$$\bigcup_{j=1}^{r_1} \bigcup_{l=0}^{k-1} {\rm{Gal}}(L_k/F(e^{2i\pi l/k} \sqrt[k]{t_j})) = \bigcup_{j=1}^{r_1} {\rm{Gal}}(L_k/F( e^{2i \pi / n_j})) =  \bigcup_{j=1}^{r_1} {\rm{Gal}}(L_k/F( t_j)).$$
Moreover, as condition ${\rm{(H)}}$ holds, one has
$$\bigcup_{j=r_1 + 1}^{r} \bigcup_{l=0}^{k-1} {\rm{Gal}}(L_k/F(e^{2i\pi l/k} \sqrt[k]{t_j})) \subseteq \bigcup_{j=r_1 + 1}^{r} {\rm{Gal}}(L_k/F(t_j)) \subseteq \bigcup_{j=1}^{r_1} {\rm{Gal}}(L_k/F(t_j)).$$
We then get 
$$\bigcup_{j=1}^{r} \bigcup_{l=0}^{k-1} {\rm{Gal}}(L_k/F(e^{2i\pi l/k} \sqrt[k]{t_j})) =  \bigcup_{j=1}^{r_1} {\rm{Gal}}(L_k/F(t_j)) = \bigcup_{j=1}^{r} {\rm{Gal}}(L_k/F(t_j)).$$
(as condition (H) holds). Hence $f(k)=1$. It then remains to apply part (1) of Theorem \ref{thm 4} to finish the proof.
\end{proof}

In Proposition \ref{converse 2} below, we give a more precise conclusion in the previous case (2). 

\begin{proposition} \label{converse 2}
Assume that $P(T)$ has a root that is a root of unity and that is in $F$. Denote the number of such roots by $r'_1$ and assume that $t_1, \dots, t_{r'_1}$ are in $F$. Then condition {\rm{($*$/$k$)}} of Proposition 1 fails for each integer $k \geq 1$ that is coprime to $n_j$ for some $j \in \{1,\dots,r'_1\}$.
\end{proposition}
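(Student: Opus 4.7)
The plan is to reduce to Theorem \ref{thm 4}(1) and show $f(k)=1$ by isolating a single index $j_0 \in \{1,\dots,r'_1\}$ with $\gcd(k,n_{j_0})=1$ whose contribution to the numerator of $f(k)$ already fills up all of $\Gal(L_k/F)$.

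First I would invoke the general inequality
\[
\bigcup_{l=0}^{k-1} \Gal(L_k/F(e^{2i\pi l/k}\sqrt[k]{t_j})) \subseteq \Gal(L_k/F(t_j))
\]
for every $j\in\{1,\dots,r\}$, which holds because $(e^{2i\pi l/k}\sqrt[k]{t_j})^k=t_j$, so $F(t_j)\subseteq F(e^{2i\pi l/k}\sqrt[k]{t_j})$. Taking the union over $j$ and comparing cardinalities shows that the numerator of $f(k)$ is always $\leq$ the denominator, i.e.\ $f(k)\leq 1$. Thus to prove the statement it suffices, by Theorem \ref{thm 4}(1), to produce the reverse inequality for our specific $k$.

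Next I would apply Lemma \ref{prime 2} with the chosen $j_0$ to get
\[
\bigcup_{l=0}^{k-1} \Gal(L_k/F(e^{2i\pi l/k}\sqrt[k]{t_{j_0}})) = \Gal\!\bigl(L_k/F(e^{2i\pi/(h_k(n_{j_0})\cdot n_{j_0})})\bigr).
\]
Since $\gcd(k,n_{j_0})=1$, no prime divides both, so by definition $h_k(n_{j_0})=1$, and the right-hand side reduces to $\Gal(L_k/F(e^{2i\pi/n_{j_0}}))$. Because $\gcd(m_{j_0},n_{j_0})=1$, the element $t_{j_0}=e^{2i\pi m_{j_0}/n_{j_0}}$ is a primitive $n_{j_0}$-th root of unity, hence $F(e^{2i\pi/n_{j_0}})=F(t_{j_0})$; and by assumption $t_{j_0}\in F$, so $F(t_{j_0})=F$. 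Consequently the displayed union equals $\Gal(L_k/F)$.

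Combining, the numerator of $f(k)$ is bounded below by $|\Gal(L_k/F)|$, which in turn dominates the denominator $|\bigcup_{j=1}^{r}\Gal(L_k/F(t_j))|$. Together with the opposite inequality from the first step, this forces $f(k)=1$, and Theorem \ref{thm 4}(1) then yields that $(*/k)$ fails. There is no real obstacle here beyond bookkeeping; the conceptual content is entirely concentrated in the observation that having $t_{j_0}\in F$ \emph{and} $\gcd(k,n_{j_0})=1$ collapses the $F(e^{2i\pi l/k}\sqrt[k]{t_{j_0}})$'s far enough that their Galois groups already exhaust $\Gal(L_k/F)$.
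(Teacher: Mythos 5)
Your proposal is correct and follows essentially the same route as the paper: apply Lemma \ref{prime 2} to the index $j_0$ with $\gcd(k,n_{j_0})=1$, use $h_k(n_{j_0})=1$ and $t_{j_0}\in F$ to see that the corresponding union is all of $\Gal(L_k/F)$, and conclude $f(k)=1$ via Theorem \ref{thm 4}(1). The only cosmetic difference is that the paper directly observes that the denominator also equals $\Gal(L_k/F)$ (since $t_{j_0}\in F$), whereas you sandwich $f(k)$ between the two inequalities; both are fine.
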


\noindent
In particular, if $P(T)$ has at least two roots that are roots of unity, that are in $F$ and that have coprime orders, then condition {\rm{($*$/$k$)}} of Proposition 1 fails for all prime numbers $k$.

\begin{proof}
Let $k$ be a positive integer. Assume that $k$ is coprime to $n_{j_0}$ for some $j_0 \in \{1,\dots,r'_1\}$. By Lemma \ref{prime 2}, one has
$$\bigcup_{l=0}^{k-1} {\rm{Gal}}(L_k/F(e^{2i\pi l/k} \sqrt[k]{t_{j_0}})) = {\rm{Gal}}(L_k/F( e^{2i \pi /(h_k(n_{j_0}) \cdot n_{j_0})})).$$
As $k$ and $n_{j_0}$ are coprime, one has $h_k(n_{j_0})=1$. This gives
$${\rm{Gal}}(L_k/F( e^{2i \pi /(h_k(n_{j_0}) \cdot n_{j_0})})) = {\rm{Gal}}(L_k/F( e^{2i \pi / n_{j_0}})) ={\rm{Gal}}(L_k/F)$$
(as $t_{j_0}$ is in $F$). Hence 
$$
\bigcup_{j=1}^{r} \bigcup_{l=0}^{k-1} {\rm{Gal}}(L_k/F(e^{2i\pi l/k} \sqrt[k]{t_j})) = {\rm{Gal}}(L_k/F).
$$
Moroever, as $t_{j_0}$ is in $F$, one has $\bigcup_{j=1}^{r} {\rm{Gal}}(L_k/F(t_j)) = {\rm{Gal}}(L_k/F).$
This provides $f(k)=1$, thus ending the proof.
\end{proof}

\bibliography{Biblio2}
\bibliographystyle{alpha}

\end{document}